\newtheorem{Theorem}{Theorem}
\newtheorem{Corollary}[Theorem]{Corollary}
\newtheorem{Lemma}[Theorem]{Lemma}
 { \theoremstyle{definition}
\newtheorem{Definition}[Theorem]{Definition}
\newtheorem{Remark}[Theorem]{Remark} }
\begin{document}

\allowdisplaybreaks

\newcommand{\arXivNumber}{2005.00161}

\renewcommand{\thefootnote}{}

\renewcommand{\PaperNumber}{068}

\FirstPageHeading

\ShortArticleName{Gromov Rigidity of Bi-Invariant Metrics on Lie Groups and Homogeneous Spaces}

\ArticleName{Gromov Rigidity of Bi-Invariant Metrics\\ on Lie Groups and Homogeneous Spaces\footnote{This paper is a~contribution to the Special Issue on Scalar and Ricci Curvature in honor of Misha Gromov on his 75th Birthday. The full collection is available at \href{https://www.emis.de/journals/SIGMA/Gromov.html}{https://www.emis.de/journals/SIGMA/Gromov.html}}}

\Author{Yukai SUN~$^\dag$ and Xianzhe DAI~$^\ddag$}

\AuthorNameForHeading{Y.~Sun and X.~Dai}

\Address{$^\dag$~School of Mathematical Sciences, East China Normal University,\\
\hphantom{$^\dag$}~500 Dongchuan Road, Shanghai 200241, P.R.~of China}
\EmailD{\href{mailto:52195500003@stu.ecnu.edu.cn}{52195500003@stu.ecnu.edu.cn}}

\Address{$^\ddag$~Department of Mathematics, UCSB, Santa Barbara CA 93106, USA}
\EmailD{\href{mailto:dai@math.ucsb.edu}{dai@math.ucsb.edu}}

\ArticleDates{Received May 04, 2020, in final form July 22, 2020; Published online July 25, 2020}

\Abstract{Gromov asked if the bi-invariant metrics on a compact Lie group are extremal compared to any other metrics. In this note, we prove that the bi-invariant metrics on a compact connected semi-simple Lie group $G$ are extremal (in fact rigid) in the sense of Gromov when compared to the left-invariant metrics. In fact the same result holds for a~compact connected homogeneous manifold $G/H$ with $G$ compact connect and semi-simple.}

\Keywords{extremal/rigid metrics; Lie groups; homogeneous spaces; scalar curvature}

\Classification{53C20; 53C24; 53C30}

\renewcommand{\thefootnote}{\arabic{footnote}}
\setcounter{footnote}{0}

\section{Introduction}

In \cite{2}, Gromov asks: are bi-invariant metrics on compact Lie groups extremal? (This is already problematic for ${\rm SO}(5)$.) Here a~Riemannian metric $g$ on a differentiable manifold $M$ is extremal in the sense of Gromov (not to be confused with Calabi's extremal metrics in K\"ahler geometry) if any metric~$g'$ on $M$ with $g' \geq g$ and $R_{g'} \geq R_{g}$ must have $R_{g'}= R_{g}$, where $R_g$, $R_{g'}$ denote the scalar curvature of $g$, $g'$ respectively. The metric $g$ is rigid in the sense of Gromov if in fact $g'=g$ from the conditions above.

The first result of this type is \cite{3} in which Llarull showed that the standard metric on $S^n$ is rigid. The work gives a positive answer to an earlier question of Gromov, which is motivated by Gromov--Lawson's famous work on the non-existence of positive scalar curvature metrics on the torus \cite{9}, later extended to more general class of manifolds, namely the enlargeable manifolds. In the same spirit, Llarull in fact proved that a metric on a compact manifold admitting a~$\big(1,\Lambda^2\big)$-contracting map to $S^{n}$ is rigid. Min-Oo discussed the extremality/rigidity of hermitian symmetric spaces of compact type in \cite{7}. The extremality/rigidity of complex and quaternionic projective spaces was established by Kramer~\cite{8}. Later, Goette and Semmel\-mann~\cite{6} proved that compact symmetric spaces of type $G/K$ with $\operatorname{rk}(G)-\operatorname{rk}(K)\leq 1$ are extremal (see also~\cite{16}). Then Listing improves Goette--Semmelmann's result in~\cite{4}, by weakening the extremality condition.

Note that a Lie group with a bi-invariant metric is a symmetric space, but not of the types considered above. In this short note, we present a partial positive answer to Gromov's question. Namely, we show that the bi-invariant metrics on a compact connected semi-simple Lie group~$G$ are rigid among the left-invariant metrics. More generally, we show that the normal metrics on any compact connected homogeneous space~$G/H$ without torus factor are rigid among $G$-invariant metrics on $G/H$.

\begin{Theorem}\label{main theorem}	Let $M=G/H$ be a compact homogeneous space, with $G$ a compact connected semi-simple Lie group. Then any bi-invariant metric $($also known as normal homogeneous metric$)$~$g_0$ on~$G/H$ is rigid among the $G$-invariant metrics. In other words, if $g$ is a $G$-invariant metric on $G/H$ such that $g\geq g_0$ and $R_g\geq R_{g_0}$, then $g=g_0$.
\end{Theorem}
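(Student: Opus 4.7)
The plan is to parametrize $G$-invariant metrics on $G/H$ by symmetric operators on the reductive complement, apply the standard scalar curvature formula for such metrics, and reduce rigidity to an elementary algebraic inequality. Fix $Q = -B$, the positive-definite bi-invariant form on $\mathfrak{g}$ (available because $G$ is compact semi-simple), with reductive splitting $\mathfrak{g} = \mathfrak{h} \oplus \mathfrak{m}$, $\mathfrak{m} = \mathfrak{h}^\perp$. Every $G$-invariant metric on $G/H$ is $g = Q(A\,\cdot\,,\,\cdot\,)|_\mathfrak{m}$ for a unique $Q$-symmetric, positive-definite, $\operatorname{Ad}(H)$-equivariant $A \colon \mathfrak{m} \to \mathfrak{m}$; the condition $g \geq g_0$ becomes $A \geq I$. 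Since $A$ is $\operatorname{Ad}(H)$-equivariant its eigenspaces are $\operatorname{Ad}(H)$-invariant, and Schur's lemma applied one isotypic component at a time lets me refine the $\operatorname{Ad}(H)$-irreducible decomposition so that $\mathfrak{m} = \bigoplus_i \mathfrak{m}_i$ with $A|_{\mathfrak{m}_i} = \lambda_i \operatorname{Id}$ and $\lambda_i \geq 1$.

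Next I would apply the Wang--Ziller scalar curvature formula for a diagonal $G$-invariant metric on $G/H$. With $d_i = \dim \mathfrak{m}_i$, $b_i$ the Casimir eigenvalue of the isotropy action on $\mathfrak{m}_i$, and the totally symmetric bracket invariants $[ijk] = \sum_{a,b,c} Q([X^i_a, X^j_b], X^k_c)^2$, this gives an explicit rational function $R_g = F(\lambda_1, \dots, \lambda_r)$ in which the $\lambda$-dependence enters only through $1/\lambda_i$ and $\lambda_k/(\lambda_i\lambda_j)$, with $R_{g_0} = F(1, \dots, 1)$.

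The analytic core is the elementary inequality
\[
\frac{\lambda_i}{\lambda_j\lambda_k} + \frac{\lambda_j}{\lambda_i\lambda_k} + \frac{\lambda_k}{\lambda_i\lambda_j} \;\geq\; \frac{1}{\lambda_i} + \frac{1}{\lambda_j} + \frac{1}{\lambda_k},
\]
valid for all positive $\lambda_i, \lambda_j, \lambda_k$ (it clears to $\lambda_i^2\lambda_j^2 + \lambda_j^2\lambda_k^2 + \lambda_k^2\lambda_i^2 \geq \lambda_i\lambda_j\lambda_k(\lambda_i + \lambda_j + \lambda_k)$, an AM--GM consequence applied pairwise), together with the trivial $1/\lambda_i \leq 1$ for $\lambda_i \geq 1$. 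Using the total symmetry of $[ijk]$ and the non-negativity of $b_i$, I would apply these two inequalities termwise to deduce $R_g \leq R_{g_0}$; combined with the hypothesis $R_g \geq R_{g_0}$ this forces equality.

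For rigidity, strictness of AM--GM unless $\lambda_i = \lambda_j = \lambda_k$ and of $1/\lambda_i \leq 1$ unless $\lambda_i = 1$ forces $\lambda_i = 1$ on every $\mathfrak{m}_i$ for which either $b_i > 0$ or some $[ijk] \neq 0$ couples it to a piece with different $\lambda$-value. The semi-simplicity of $G$ is essential here, because an $\mathfrak{m}_i$ with $b_i = 0$ and $[ijk] = 0$ for all $j, k$ would contain a nonzero $X \in \mathfrak{g}$ with $\operatorname{ad}_X^2 = 0$, contradicting non-degeneracy of $B$ on semi-simple $\mathfrak{g}$. I anticipate the main obstacle to be organising the scalar curvature formula so that the above inequality applies cleanly termwise and tracking strict versus non-strict inequality across the pieces; the algebraic inputs and the semi-simplicity conclusion are then routine.
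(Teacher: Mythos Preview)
Your proposal is correct and follows essentially the same route as the paper: diagonalize the $G$-invariant metric relative to the bi-invariant one, invoke the Wang--Ziller scalar curvature formula, exploit the full symmetry of the structure constants $[ijk]$, and conclude rigidity from the equality case together with semi-simplicity. The only substantive variation is in how the key algebraic inequality is established. The paper shows directly that
\[
\lambda_i^2+\lambda_j^2+\lambda_k^2-2(\lambda_i\lambda_j+\lambda_j\lambda_k+\lambda_k\lambda_i)+3\lambda_i\lambda_j\lambda_k\;\ge\;0
\qquad(\lambda_i,\lambda_j,\lambda_k\ge 1)
\]
via an ad hoc regrouping, whereas you split this into two pieces, namely $\lambda_i^2+\lambda_j^2+\lambda_k^2\ge \lambda_i\lambda_j+\lambda_j\lambda_k+\lambda_k\lambda_i$ (valid for all positives, equality iff the three are equal) and $3\lambda_i\lambda_j\lambda_k\ge \lambda_i\lambda_j+\lambda_j\lambda_k+\lambda_k\lambda_i$ (from $\lambda_\bullet\ge 1$), whose sum is exactly the paper's inequality. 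Your decomposition is arguably cleaner and makes the equality case transparent.

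Two small errata to fix when you write it up. First, clearing denominators in your displayed inequality $T\ge S$ gives $\lambda_i^2+\lambda_j^2+\lambda_k^2\ge \lambda_i\lambda_j+\lambda_j\lambda_k+\lambda_k\lambda_i$, not the quartic form you wrote (that quartic inequality is also true, but it is not what clearing yields). Second, your equality analysis is slightly misstated: once you combine both steps, any $[ijk]\neq 0$ already forces $\lambda_i=\lambda_j=\lambda_k=1$, not merely equality among those three eigenvalues, so the phrase ``couples it to a piece with different $\lambda$-value'' is irrelevant. Your final contradiction via $\operatorname{ad}_X^2=0$ is correct and is equivalent to the paper's appeal to the relation $\sum_{j,k}[ijk]=b_id_i-2c_id_i$ to conclude $b_i=0$; also note the theorem allows an arbitrary bi-invariant $g_0$, so take that as your background inner product rather than fixing $Q=-B$.
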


As an immediate consequence, we have
\begin{Corollary}\label{corollary1}
	Any bi-invariant metric on a compact connected semi-simple Lie group is rigid among the left-invariant metrics.
\end{Corollary}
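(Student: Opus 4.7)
The plan is simply to read off the corollary as the special case $H = \{e\}$ of Theorem~\ref{main theorem}. Since the trivial subgroup is closed in $G$, the quotient $M = G/\{e\}$ is a compact homogeneous space with $G$ compact connected semi-simple, so this specialization is admissible.

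I would then match up the two classes of metrics. The $G$-action on $G/\{e\}$ is by left translation, so the $G$-invariant metrics on $G/\{e\}$ are exactly the left-invariant metrics on $G$. For the reference metric, recall that a normal homogeneous metric on $G/H$ is built from an $\mathrm{Ad}(G)$-invariant inner product on $\mathfrak{g}$, restricted to the reductive complement $\mathfrak{m} = \mathfrak{h}^\perp$; when $H = \{e\}$ we have $\mathfrak{m} = \mathfrak{g}$, and this construction yields precisely the bi-invariant metrics on $G$, since a left-invariant metric on $G$ is bi-invariant if and only if the underlying inner product on $\mathfrak{g}$ is $\mathrm{Ad}(G)$-invariant.

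Under these identifications, the hypotheses of the corollary — $g$ is a left-invariant metric on $G$ with $g \geq g_0$ and $R_g \geq R_{g_0}$, where $g_0$ is a bi-invariant metric — translate verbatim into the hypotheses of Theorem~\ref{main theorem}, whose conclusion then gives $g = g_0$. The only thing to verify is that the setup of Theorem~\ref{main theorem} genuinely accommodates $H = \{e\}$, which it does since the theorem only requires $H$ to be a closed subgroup so that $G/H$ is a smooth manifold. In this sense there is no real obstacle: the corollary is a direct reduction to the main theorem rather than a separate argument, and no further calculation is needed.
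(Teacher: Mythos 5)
Your proposal is correct and matches the paper, which simply states the corollary as an immediate consequence of Theorem~\ref{main theorem} via the specialization $H=\{e\}$, under exactly the identifications you describe (left-invariant metrics as $G$-invariant metrics on $G/\{e\}$, bi-invariant metrics as the normal homogeneous ones). No further argument is needed.
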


According to \cite{1}, if a connected Lie group admits a bi-invariant metric, it is isomorphic to the product of a compact Lie group with an abelian one. The semi-simple condition rules out the abelian factor. On the other hand, we have the famous result of Gromov--Lawson~\cite{9} and Schoen--Yau \cite{10, 11} which implies that the only metrics of nonnegative scalar curvatures on the torus are the flat ones.
\begin{Remark}
 The extremal/rigid metrics discussed here have positive scalar curvature. On the other hand, we would like to point out a related but different scalar curvature (local) extremality for K\"ahler--Einstein metrics with negative scalar curvature \cite{15}. It is an immediate consequence of Theorem~1.5 in~\cite{15} that for a K\"ahler--Einstein metric $g_0$ with negative scalar curvature on a compact complex manifold with integrable infinitesimal complex deformations, any metric $g$ sufficiently close to $g_0$ satisfying $R_g\geq R_{g_0}$ and ${\rm Vol}(g) \leq {\rm Vol}(g_0)$ must have $R_g = R_{g_0}$ (and $g$ is also K\"ahler--Einstein).
\end{Remark}

\section{Preliminaries}

Given a Riemannian manifold $(M,g)$, we denote by $R_g$ the scalar curvature of $g$.
We recall Gromov's notion of extremal/rigid metrics.
\begin{Definition}
	A metric $g_0$ on $M$ is extremal (in the sense of Gromov), if any metric $g$ on $M$ satisfying $g\geq g_0$ and $R_{g}\geq R_{g_0}$ must have identical scalar curvature, $R_g=R_{g_0}$; $g_0$ is said to be rigid (in the sense of Gromov) if the conditions above imply that $g=g_0$.
\end{Definition}

For a Lie group $G$, we denote by $\operatorname{Ad}(a)$ ($a\in G$) the adjoint action of $G$ on its Lie algebra $\mathfrak{g}$, and by $\operatorname{ad}(X)$ ($X\in \mathfrak{g}$) the induced adjoint action of $\mathfrak{g}$ on itself. In particular,
\[
\operatorname{ad}(X)Y=[X, Y], \qquad X, Y \in \mathfrak{g}.
\]
A Lie group $G$ is semi-simple if its Lie algebra $\mathfrak{g}$ is semi-simple, i.e., its Killing form
\[
K(X, Y) = {\operatorname{Tr}}(\operatorname{ad}(X) \operatorname{ad}(Y)), \qquad X, Y \in \mathfrak{g}
\]
is nondegenerate. Clearly, if $\mathfrak{g}$ is semi-simple, it has a trivial center. For a compact Lie group, the semi-simple condition is equivalent to its Lie algebra having trivial center.

If a metric on $G$ is both left-invariant and right-invariant, then it is called bi-invariant. When~$G$ is compact, bi-invariant metrics always exist. Left-invariant metrics on~$G$ are in one-to-one correspondence with inner products on its Lie algebra $\mathfrak{g}$. The following well known result plays a crucial role in the proof of our main result.

\begin{Theorem}[{\cite[Lemma~7.2]{1}}]\label{theorem2}
	In the case of a connected group $G$, a left-invariant metric is actually bi-invariant if and only if the linear transformation
	$\operatorname{ad}(X)$ is skew-adjoint with respect to the corresponding inner product, for every $X$ in the Lie algebra $\mathfrak{g}$ of~$G$.
\end{Theorem}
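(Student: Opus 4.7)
The plan is to prove both directions of the equivalence using the adjoint representation and its compatibility with left translations.

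For the \emph{only if} direction, suppose $g_{0}$ is bi-invariant. Then for each $a\in G$ the conjugation $C_{a}=L_{a}\circ R_{a^{-1}}$ is a composition of two isometries and therefore an isometry. Since $C_{a}$ fixes the identity $e$, its differential at $e$, namely $\operatorname{Ad}(a)\colon \mathfrak{g}\to\mathfrak{g}$, must preserve the inner product $\langle\cdot,\cdot\rangle$ on $\mathfrak{g}=T_{e}G$ induced by $g_{0}$. Setting $a=\exp(tX)$, using $\operatorname{Ad}(\exp(tX))=\exp(t\operatorname{ad}(X))$, and differentiating the orthogonality relation $\langle \operatorname{Ad}(\exp(tX))Y,\operatorname{Ad}(\exp(tX))Z\rangle=\langle Y,Z\rangle$ at $t=0$ yields $\langle \operatorname{ad}(X)Y,Z\rangle+\langle Y,\operatorname{ad}(X)Z\rangle=0$, which is exactly skew-adjointness of $\operatorname{ad}(X)$.

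For the converse I would essentially run the previous argument in reverse, with connectedness of $G$ as the extra ingredient. From skew-adjointness of $\operatorname{ad}(X)$ for every $X\in\mathfrak{g}$, the identity $\operatorname{Ad}(\exp(tX))=\exp(t\operatorname{ad}(X))$ makes each $\operatorname{Ad}(\exp(tX))$ orthogonal on $(\mathfrak{g},\langle\cdot,\cdot\rangle)$. Since $G$ is connected, every $a\in G$ is a finite product of such exponentials, and orthogonal transformations are closed under composition, so $\operatorname{Ad}(a)$ is orthogonal for every $a\in G$. To upgrade this pointwise statement at $T_{e}G$ to a global isometry statement about $C_{a}$, I would use the intertwining identity $C_{a}\circ L_{g}=L_{aga^{-1}}\circ C_{a}$: after differentiation, and under the isometric identifications $T_{g}G\cong \mathfrak{g}\cong T_{aga^{-1}}G$ provided by left translations (which are isometries by hypothesis), $(dC_{a})_{g}$ is realized as $\operatorname{Ad}(a)$, hence an isometry. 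Therefore $C_{a}$ is a global isometry, and then $R_{a}=C_{a^{-1}}\circ L_{a}$ is an isometry as a composition of isometries, which shows $g_{0}$ is also right-invariant and hence bi-invariant.

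The only real subtlety lies in the converse direction: one must pass from the infinitesimal fact that each $\operatorname{ad}(X)$ is skew-adjoint to the global isometry property of $C_{a}$ on all of $G$. This passage relies on (i) the connectedness of $G$, which allows one to write an arbitrary group element as a finite product of exponentials, and (ii) the intertwining of $C_{a}$ with left translations, which transfers orthogonality of $\operatorname{Ad}(a)$ at $e$ to isometricity of $C_{a}$ everywhere. Neither step is hard, but together they are what separates the proof from a purely Lie-algebraic computation.
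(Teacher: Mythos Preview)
Your proof is correct in both directions, and the argument you give is essentially the standard one (as found, for example, in Milnor's paper or in Cheeger--Ebin). Note, however, that the present paper does \emph{not} supply its own proof of this statement: Theorem~\ref{theorem2} is simply quoted from \cite[Lemma~7.2]{1} and used as a black box (specifically, to conclude that the structural constants $C_{\alpha\beta}^{\gamma}$ with respect to a bi-invariant orthonormal basis are skew-symmetric in all three indices). So there is nothing in the paper to compare your argument against; your write-up is a perfectly good justification of a result the authors chose to cite rather than reprove.
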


Now let $M=G/H$ be a compact connected homogeneous space, where $G$ is a compact connected Lie group, $H$ a closed subgroup, and the action of $G$ on $G/H$
is effective.
Let $\mathfrak{h}\subset \mathfrak{g}$ be the Lie algebra of $H$. Denote by $\operatorname{Ad}_G$ the adjoint action of $G$ on $\mathfrak{g}$ and $\operatorname{Ad}_H=\operatorname{Ad}_G|_H$ its restriction to $H$. Since $\operatorname{Ad}_H$ preserves $\mathfrak{h}$, it induces an action on $\mathfrak{g}/\mathfrak{h}$, which is equivalent to the isotropy representation of $H$.
A metric $g$ on $M=G/H$ is called $G$-invariant if it is invariant under the left action of $G$. $G$-invariant metrics on $G/H$ are naturally identified with inner products on $\mathfrak{g}/\mathfrak{h}$ which are invariant under the $\operatorname{Ad}_H$ action, see Proposition 3.16 in \cite{13}. In particular, a bi-invariant metric on $G$ gives rise to a $G$-invariant metric on $G/H$. The corresponding metric on $G/H$, usually
referred as a normal homogeneous metric on $G/H$ in the literature, will still be called bi-invariant here.

\section{Proof of the theorem}

Our proof relies crucially on a simple elegant formula for the scalar curvature for $G$-invariant metrics, as well as another lemma, in \cite{12}. We first recall this formula and the setup.

Let $g_0$ be a bi-invariant metric on $G$; still denote by $g_0$ the induced metric on $G/H$. Let $\mathfrak{g}=\mathfrak{h} + \mathfrak{m}$ be an $\operatorname{Ad}_H$ invariant decomposition orthogonal with respect to $g_0$. Then $G$-invariant metrics on $G/H$ are identified with $\operatorname{Ad}_H$-invariant inner products on $\mathfrak{m}$.

Let $\langle \cdot, \cdot \rangle_0$ be the $\operatorname{Ad}_H$-invariant inner product on $\mathfrak{m}$ corresponding to $g_0$. Let $\langle \cdot, \cdot \rangle$ be an $\operatorname{Ad}_H$-invariant inner product on $\mathfrak{m}$ inducing a $G$-invariant metric $g$ on $G/H$. Then, there is a~positive self-adjoint operator $S$ on $(\mathfrak{m},\langle X,Y\rangle_0)$ commuting with the $\operatorname{Ad}_H$-action such that
\[ \langle X,Y\rangle=\langle S(X),Y\rangle_0\]
for all $X,Y\in \mathfrak{m}$.

Since any eigenspace of $S$ is $\operatorname{Ad}_H$-invariant, there are $\operatorname{Ad}_H$-invariant subspaces $\mathfrak{m}_1, \dots , \mathfrak{m}_s$ of~$\mathfrak{m}$ such that
\begin{equation} \label{decomp}
\mathfrak{m} = \mathfrak{m}_1 \oplus \cdots \oplus \mathfrak{m}_s
\end{equation}
in orthogonal decomposition with respect to $\langle \cdot, \cdot \rangle_0$; the action of $\operatorname{Ad}_H$ on each $\mathfrak{m}_i$ is irreducible, and $S(X)=\lambda_i X$ for all $X\in\mathfrak{m}_i$, for some $\lambda_1,\dots,\lambda_s>0$. Consequently,
\[
\langle X,Y\rangle=\lambda_1 \langle X_1,Y_1\rangle_0 + \cdots + \lambda_s \langle X_s,Y_s\rangle_0,
\]
for $X=X_1+\cdots+X_s$, $Y=Y_1+\cdots+Y_s\in \mathfrak{m}$ decomposed with
respect to (\ref{decomp}). The metric $g$ is called \textit{diagonal} with respect to
the decomposition in~(\ref{decomp}).

For such metrics, there is a simple elegant formula for the scalar curvature; we refer the reader to \cite{12} for a more general discussion. Before we state this formula, let us point out the simplified situation when $M=G$.
Each $\mathfrak{m}_i$ in (\ref{decomp}) is spanned by a basis vector whenever one chooses an orthonormal basis of $\mathfrak{m}=\mathfrak{g}$ consisting of eigenvectors of~$S$. (Thus, such decompositions are by no means unique.)

 Let $\{E_{\alpha}\}$ be an orthonormal basis of $(\mathfrak{m}, \langle \,,\,\rangle_0)$ adapted to the decomposition (\ref{decomp}). We write $[E_{\alpha},E_{\beta}]_{\mathfrak{m}}=\sum_{\gamma} C_{\alpha\beta}^{\gamma}E_{\gamma}$ for some real numbers $\big\{ C_{\alpha\beta}^{\gamma} \big\}$ that we call structural constants. Here $[\ ,\ ]_{\mathfrak{m}}$ is the $\mathfrak{m}$-component of $[\ , \ ]$.
Set
\[ A_{ij}^{k}=\sum_{\alpha,\beta,\gamma}\big(C_{\alpha\beta}^{\gamma}\big)^2,\]
where the summation runs over $E_{\alpha} \in \mathfrak{m}_i$, $E_{\beta} \in \mathfrak{m}_j$, $E_{\gamma} \in \mathfrak{m}_k$.

Let $d_i=\dim \mathfrak{m}_i$. Let $B$ be the negative of the Killing form: $B(X, Y)=-K(X, Y)$. Then $B(X, X)\geq 0$, with equality if and only if $X$ is central. We define the real number $b_i$ by $B(X,Y)=b_i\langle X,Y\rangle_0$ for all $X,Y\in \mathfrak{m}_i$. Note that $b_i=0$ if and only if $\mathfrak{m}_i$ is included in the center of $\mathfrak{g}$. The following formula is equation~(1.3) in~\cite{12}.

\begin{Lemma}[{\cite[equation~(1.3)]{12}}] \label{wz1}
	Let $g$ be a $G$-invariant metric on $G/H$ with a corresponding decomposition
\eqref{decomp} as described above. Then the scalar curvature of $g$ is
	\begin{equation*} %\label{fffsc}
	R_g=\frac{1}{2} \sum_{i=1}^s \frac{b_id_i}{\lambda_i} - \frac{1}{4} \sum_{i, j, k=1}^s A^k_{ij} \frac{\lambda_{k}}{\lambda_i\lambda_{j}}.
	\end{equation*}
\end{Lemma}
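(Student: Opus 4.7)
\emph{Plan.} The formula is a direct computation of $R_g$ via the Koszul formula. First I would fix an orthonormal basis $\{E_\alpha\}$ of $(\mathfrak{m}, \langle \cdot, \cdot \rangle_0)$ adapted to \eqref{decomp}, so that each $E_\alpha$ lies in a single component $\mathfrak{m}_{i(\alpha)}$. Then $\{\tilde E_\alpha := \lambda_{i(\alpha)}^{-1/2} E_\alpha\}$ is $g$-orthonormal and $R_g = \sum_\alpha \operatorname{Ric}_g(\tilde E_\alpha, \tilde E_\alpha)$.

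The Koszul formula, applied to $G$-invariant extensions of $X, Y, Z \in \mathfrak{m}$, expresses the Levi-Civita connection of $g$ as $\nabla_X Y = \tfrac12 [X,Y]_\mathfrak{m} + U(X,Y)$, where $U$ is the symmetric bilinear map determined by $2\langle U(X,Y), Z\rangle = \langle [Z,X]_\mathfrak{m}, Y\rangle + \langle X, [Z,Y]_\mathfrak{m}\rangle$. Substituting this into the curvature tensor $R(X,Y)Z = \nabla_X \nabla_Y Z - \nabla_Y \nabla_X Z - \nabla_{[X,Y]} Z$ and taking the trace produces a Ricci formula of the schematic form
\[
\operatorname{Ric}_g(X,X) = -\tfrac12 B(X,X) + (\text{bracket-squared terms}) + (U\text{-terms}),
\]
where the Killing form piece $-\tfrac12 B(X,X)$ comes from the $\mathfrak{h}$-component of $[X,\cdot]$ together with the identity $B(X,X)=\sum_\beta |\operatorname{ad}(X) E_\beta|_0^2$ (summed over a full basis of $\mathfrak{g}$), and the remaining pieces involve only $\mathfrak{m}$-brackets and the $U$ tensor.

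Summing over the adapted frame $\{\tilde E_\alpha\}$, the Killing-form contribution yields $\tfrac12 \sum_i b_i d_i / \lambda_i$: indeed $B|_{\mathfrak{m}_i} = b_i \langle\cdot,\cdot\rangle_0$, and each of the $d_i$ vectors $\tilde E_\alpha$ with $E_\alpha \in \mathfrak{m}_i$ carries a rescaling factor $\lambda_i^{-1/2}$, contributing $b_i/\lambda_i$. Next, expanding $[E_\alpha,E_\beta]_\mathfrak{m} = \sum_\gamma C_{\alpha\beta}^\gamma E_\gamma$ and tracking the three rescalings $\lambda_{i(\alpha)}^{-1/2}, \lambda_{i(\beta)}^{-1/2}, \lambda_{i(\gamma)}^{-1/2}$, the bracket-squared and $U$-contributions collapse into $-\tfrac14 \sum_{i,j,k} A_{ij}^k\,\lambda_k/(\lambda_i\lambda_j)$.

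The main obstacle is precisely this last collapse: one must verify that the $U$-contributions, which \emph{a priori} introduce cross-ratios like $\lambda_i\lambda_j/\lambda_k$ or $\lambda_k^{-1}$ with different index patterns, actually reorganize together with the bracket-squared terms into the single combination $A_{ij}^k \lambda_k/(\lambda_i\lambda_j)$. The key simplification is that $g_0$ is bi-invariant, so by Theorem~\ref{theorem2} the structural constants $C_{\alpha\beta}^\gamma$ are totally antisymmetric in $\alpha,\beta,\gamma$; in particular $A_{ij}^k$ is symmetric in all three indices. This antisymmetry, combined with the $\operatorname{Ad}_H$-irreducibility of each $\mathfrak{m}_i$, is exactly what is needed for the reorganization to go through and for the stated compact formula to emerge.
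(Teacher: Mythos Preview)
The paper does not provide its own proof of this lemma: it is quoted verbatim as equation~(1.3) of Wang--Ziller~\cite{12} and used as a black box. Your outline via the Koszul formula, the connection $\nabla_X Y = \tfrac12[X,Y]_\mathfrak{m} + U(X,Y)$, and a trace over an adapted $g$-orthonormal frame is exactly the standard derivation (as in Wang--Ziller or Besse, Chapter~7), so there is nothing to compare against and your approach is the expected one.

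Two small remarks on precision. First, the $\operatorname{Ad}_H$-irreducibility of each $\mathfrak{m}_i$ is not itself needed for the formula; what matters is only that $g$ is \emph{diagonal} with respect to the $g_0$-orthogonal decomposition~\eqref{decomp}, i.e., that $S|_{\mathfrak{m}_i}=\lambda_i\operatorname{Id}$. Irreducibility is how the paper arranges for diagonality, but the scalar curvature computation only uses the eigenvalue structure. Second, the phrase ``the Killing form piece \ldots\ comes from the $\mathfrak{h}$-component of $[X,\cdot]$'' is slightly misleading: the $-\tfrac12 B(X,X)$ term in the Ricci formula arises from combining the $\mathfrak{h}$-bracket contribution with part of the $\mathfrak{m}$-bracket contribution via the identity $B(X,X)=\sum_\beta |[X,E_\beta]|_0^2$ (over a $g_0$-orthonormal basis of all of $\mathfrak{g}$, using that $\operatorname{ad}(X)$ is $g_0$-skew by Theorem~\ref{theorem2}). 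With those caveats, your sketch is correct, and the ``collapse'' you flag as the main obstacle is indeed handled by the total antisymmetry $C_{\alpha\beta}^\gamma = -C_{\gamma\beta}^\alpha$, which makes the various $U$-terms either vanish (e.g., $Z=\sum_\alpha U(\tilde E_\alpha,\tilde E_\alpha)=0$) or recombine into the stated single ratio.
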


The following lemma from~\cite{12} relates $b_id_i$ to the structural constants. Let
\[ C_{\mathfrak{m}_i, g_0|_{\mathfrak{h}}}= - \sum_{i=1}^h \operatorname{ad}(Z_i) \circ \operatorname{ad}(Z_i)\] be the Casimir operator of the representation of $\mathfrak{h}$ on $\mathfrak{m}_i$, where $\{Z_1, \dots, Z_h\}$ is an orthonormal basis of $(\mathfrak{h}, g_0|_{\mathfrak{h}})$ and $\operatorname{ad}(Z_i)$ should be interpreted as its restriction on $\mathfrak{m}_i$. Since $\mathfrak{m}_i$ is $\operatorname{Ad}_H$-irreducible, $C_{\mathfrak{m}_i, g_0|_{\mathfrak{h}}} =c_i \operatorname{Id}$ for some constant $c_i \geq 0$. Moreover, $c_i=0$ if and only if $\operatorname{Ad}_H$ acts trivially on $\mathfrak{m}_i$.
\begin{Lemma}[{\cite[Lemma 1.5]{16}}] \label{wz2}
	One has, for $i=1, \dots, s$,
	\begin{equation*} %\label{efsc}
	\sum_{j,k=1}^s A^k_{ij}=b_id_i-2c_id_i.
	\end{equation*}
\end{Lemma}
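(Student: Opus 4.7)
The plan is to expand $b_i d_i = \sum_{E_\alpha \in \mathfrak{m}_i} B(E_\alpha,E_\alpha)$ directly from the definition of the Killing form and to show that the resulting trace splits into one piece equal to $\sum_{j,k} A_{ij}^k$ and a second piece equal to $2 c_i d_i$. The key input is the skew-adjointness of $\operatorname{ad}(X)$ with respect to the bi-invariant inner product $\langle\cdot,\cdot\rangle_0$ on $\mathfrak{g}$ provided by Theorem~\ref{theorem2}.

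First I would fix $E_\alpha\in\mathfrak{m}_i$ and assemble an orthonormal basis of $(\mathfrak{g},\langle\cdot,\cdot\rangle_0)$ by combining $\{Z_1,\dots,Z_h\}$ with the given $\{E_\beta\}$ (extending $\langle\cdot,\cdot\rangle_0$ so that $\mathfrak{h}\perp\mathfrak{m}$). Skew-adjointness gives
\[
B(E_\alpha,E_\alpha)= -\operatorname{Tr}\bigl(\operatorname{ad}(E_\alpha)^{2}\bigr)=\operatorname{Tr}\bigl(\operatorname{ad}(E_\alpha)^{\ast}\operatorname{ad}(E_\alpha)\bigr)=\sum_{X,Y}\langle[E_\alpha,X],Y\rangle_0^{2},
\]
with $X,Y$ ranging over the chosen basis of $\mathfrak{g}$. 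I would then break this sum into four blocks according to whether $X,Y$ lies in $\mathfrak{h}$ or $\mathfrak{m}$. The $\mathfrak{h}\times\mathfrak{h}$ block vanishes because $\operatorname{Ad}_H$-invariance of $\mathfrak{m}$ yields $[\mathfrak{h},\mathfrak{m}]\subset\mathfrak{m}$, so $[E_\alpha,Z_l]\in\mathfrak{m}$ is orthogonal to every $Z_m$. The identity $\langle[E_\alpha,E_\beta],Z_l\rangle_0 = \langle[Z_l,E_\alpha],E_\beta\rangle_0$, which again uses skew-adjointness, shows the $\mathfrak{h}\times\mathfrak{m}$ and $\mathfrak{m}\times\mathfrak{h}$ blocks contribute identically after squaring. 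The $\mathfrak{m}\times\mathfrak{m}$ block equals $\sum_{\beta,\gamma}(C_{\alpha\beta}^\gamma)^2$, since $E_\gamma\in\mathfrak{m}$ forces $\langle[E_\alpha,E_\beta],E_\gamma\rangle_0 = \langle[E_\alpha,E_\beta]_{\mathfrak{m}},E_\gamma\rangle_0 = C_{\alpha\beta}^\gamma$. Hence
\[
B(E_\alpha,E_\alpha) = 2\sum_{l=1}^h \|[E_\alpha, Z_l]\|_0^2 + \sum_{\beta,\gamma}(C_{\alpha\beta}^\gamma)^2.
\]

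Summing both sides over $E_\alpha\in\mathfrak{m}_i$ produces the lemma. The left side is $b_i d_i$ by the definition of $b_i$; the structural-constant term becomes $\sum_{j,k=1}^s A_{ij}^k$ by the definition of $A_{ij}^k$. For the remaining term, $\operatorname{ad}(Z_l)$ preserves $\mathfrak{m}_i$ (again by $\operatorname{Ad}_H$-invariance) and is skew-symmetric there, so $\sum_{E_\alpha\in\mathfrak{m}_i}\|[E_\alpha,Z_l]\|_0^2 = -\operatorname{Tr}|_{\mathfrak{m}_i}\operatorname{ad}(Z_l)^2$; summing over $l$ collapses to $\operatorname{Tr}|_{\mathfrak{m}_i} C_{\mathfrak{m}_i,g_0|_{\mathfrak{h}}} = c_i d_i$. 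Rearranging gives $\sum_{j,k} A_{ij}^k = b_i d_i - 2 c_i d_i$ as claimed.

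The main obstacle is purely bookkeeping: tracking the factor of $2$ from the two equal cross blocks, and remembering that bi-invariance of $g_0$ on $G$ delivers skew-adjointness of $\operatorname{ad}(E_\alpha)$ on \emph{all} of $\mathfrak{g}$, not merely on $\mathfrak{m}$, which is what allows the two cross blocks to be identified. Once this is in place, the identity is a transparent decomposition of the Frobenius-norm trace of $\operatorname{ad}(E_\alpha)$ along $\mathfrak{h}\oplus\mathfrak{m}$.
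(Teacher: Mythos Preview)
Your argument is correct: expanding $B(E_\alpha,E_\alpha)=-\operatorname{Tr}\operatorname{ad}(E_\alpha)^2$ as a Frobenius norm with respect to the bi-invariant inner product, and then splitting along $\mathfrak{h}\oplus\mathfrak{m}$, yields exactly the claimed identity with the factor of $2$ coming from the two equal cross blocks. One cosmetic point: there is no need to ``extend'' $\langle\cdot,\cdot\rangle_0$ to $\mathfrak{g}$, since by construction $g_0$ is already the bi-invariant metric on all of $\mathfrak{g}$ and the decomposition $\mathfrak{g}=\mathfrak{h}\oplus\mathfrak{m}$ is $g_0$-orthogonal.

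As for comparison with the paper: the paper does not supply its own proof of this lemma. It is quoted from the literature (attributed to Wang--Ziller \cite{12} in the surrounding text and to \cite[Lemma~1.5]{16} in the lemma heading) and used as a black box. Your write-up is therefore a self-contained verification of a result the paper simply cites; the approach you take---tracing $\operatorname{ad}(E_\alpha)^2$ block by block---is the standard one and is essentially how the original references obtain it.
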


\begin{Remark} \label{rflg}	Again let us look at the situation when $M=G$. In this case we choose an orthonormal basis $\{E_{i}\}_{i=1}^{n}$ of $\mathfrak{g}$ consisting of eigenvectors of $S$. Then $[E_{i},E_{j}]=\sum\limits_{k=1}^n C_{ij}^{k}E_k$ via the structure constants $C_{ij}^{k}$. The decomposition (\ref{decomp}) is given by $\mathfrak{m}_i={\rm Span}\{ E_i \}$, hence $A_{ij}^{k}=\big(C^k_{ij}\big)^2$. Moreover $c_i=0$ for all $i$. Therefore, the two lemmas above yield
\begin{equation} \label{ffsclg}
R_g= \frac{1}{4} \sum_{i, j, k=1}^n \big(C^k_{ij}\big)^2
\left[ \frac{2}{\lambda_i}- \frac{\lambda_{k}}{\lambda_i\lambda_{j}} \right].
\end{equation}
This formula can also be deduced from Koszul's formula via a direct computation.
\end{Remark}

\begin{proof}[Proof of Theorem \ref{main theorem}]
Since $\{E_{\alpha}\}$ is an orthonormal basis for $(\mathfrak{m}, \langle \cdot, \cdot \rangle_0)$, and $\langle \cdot, \cdot \rangle_0$ is bi-invariant, $C_{\alpha\beta}^{\gamma}=\langle [E_{\alpha},E_{\beta}], E_{\gamma} \rangle_0$ is skew-symmetric in all three indices by Theorem~\ref{theorem2}. Hence~$A^k_{ij}$ is symmetric in all three indices.

Now the extremal conditions $\langle X,Y\rangle\geq \langle X,Y\rangle_0$
and $R_g\geq R_{g_0}$ yield $\lambda_{i}\geq 1$ $(i=1,\dots,s)$ as well as $R_g- R_{g_0} \geq 0$. %Then%For $n=3,4$,
Lemmas~\ref{wz1} and~\ref{wz2} give
\begin{align*}
	0&\leq R_g- R_{g_0} = \frac{1}{2} \sum_{i} \frac{b_id_i}{\lambda_i} (1-\lambda_i) -\frac{1}{4} \sum_{i, j, k}A^k_{ij} \left(\frac{\lambda_{k}}{\lambda_i\lambda_{j}}-1\right) \\
	&= \sum_{i} \frac{c_id_i}{\lambda_i} (1-\lambda_i) -\frac{1}{4} \sum_{i, j, k}A^k_{ij}\left[ \frac{\lambda_{k}}{\lambda_i\lambda_{j }} +1 - \frac{2}{\lambda_i} \right].
\end{align*}

Since $c_i\geq 0$ and $d_i>0$, each term in the first summation is less than or equal to zero, with equality if and only if either $c_i=0$ or $\lambda_i=1$.

For the second summation, we use the symmetry to rewrite it as
\begin{gather*}
-\frac{1}{12} \sum_{i, j, k} A^k_{ij}\left[ \frac{\lambda_{k}}{\lambda_i\lambda_{j }} +\frac{\lambda_{i}}{\lambda_j\lambda_{k}} +\frac{\lambda_{j}}{\lambda_k\lambda_{i}} - \frac{2}{\lambda_j} - \frac{2}{\lambda_i} - \frac{2}{\lambda_k}+3 \right] \\
\qquad{} =-\frac{1}{12} \sum_{i, j, k} A^k_{ij} \frac{\lambda_i^2+\lambda_j^2+\lambda_k^2-2\lambda_i\lambda_j-2\lambda_i\lambda_k-2\lambda_k\lambda_j+3\lambda_i \lambda_j\lambda_k}{\lambda_i\lambda_j\lambda_k} .
\end{gather*}
For a fixed triple $i$, $j$, $k$, we consider the order of $\lambda_{i}$, $\lambda_{j}$, $\lambda_{k}$. Without loss of generality we can assume that $\lambda_{k}\geq \lambda_{j}\geq \lambda_{i}\geq 1$. Then the summand in the sum above can be re-organized as
\begin{gather*}
 \lambda_i^2+\lambda_j^2+\lambda_k^2-2\lambda_i\lambda_j-2\lambda_i\lambda_k-2\lambda_k\lambda_j+3\lambda_i \lambda_j\lambda_k \\
\qquad{} =  (\lambda_i-\lambda_j)^2+(\lambda_k-\lambda_j)^2+\lambda_j\lambda_k(\lambda_i-1)+\lambda_j (\lambda_k-\lambda_j)+2\lambda_i\lambda_k(\lambda_j-1)
 \geq 0
\end{gather*}
with equality if and only if $\lambda_{k}= \lambda_{j}= \lambda_{i}=1$.

But then all the inequalities become equalities. Hence, either $c_i=0$ or $\lambda_i=1$ for each $i$, and, at the same time, either $A^k_{ij}=0$ or $\lambda_{k}= \lambda_{j}= \lambda_{i}=1$ for each $(i,j,k)$.
If $\lambda_i>1$ for some $i$, then $c_i=0$, and $A^k_{ij}=0$ for all $j$, $k$. Thus $b_i=0$ by Lemma~\ref{wz2}. Therefore $\mathfrak{m}_i$ is in the center of~$\mathfrak{g}$, which contradicts the hypotheses. We conclude that $\lambda_i=1$ for all $i$, and the result follows.
\end{proof}

We end with a couple of remarks.

\begin{Remark}
From the proof, we see that if a bi-invariant metric $g_0$	 on $G/H$ is not rigid among the $G$-invariant metrics, then $G/H$ must have a torus factor. Indeed, let $\mathfrak{z} \subset \mathfrak{g}$ be the center. If for some $i$, $\lambda_i>1$, then $\mathfrak{m}_i \subset \mathfrak{z}$. Decompose $\mathfrak{g}=\mathfrak{z} + \mathfrak{g'}$ and $\mathfrak{z}=\mathfrak{m}_i + \mathfrak{k}$. Then $\mathfrak{h} \subset \mathfrak{k} +\mathfrak{g'} $. It follows that $G/H=T^{d_i} \times (K\times G')/H$.
	\end{Remark}

\begin{Remark}
It is interesting to note that the extremal conditions $g\geq g_0$ and $R_g\geq R_{g_0}$ can not be changed to the opposite inequalities. In fact, there exist $G$-invariant metrics $g$ such that $g< g_0$ and $R_g< R_{g_0}$. We illustrate the situation for $M=G={\rm SU}(2)$.

The basis $E_1=\sqrt{-1}\sigma_1$, $E_2=\sqrt{-1}\sigma_2$, $E_3=\sqrt{-1}\sigma_3$ of $\mathfrak{g}$ in terms of the Pauli spin matrices $\sigma_1$, $\sigma_2$, $\sigma_3$ satisfies
$[E_1, E_2]=2E_3$ as well as its cyclic permutations. We take $g_0$ so that $\langle X, Y \rangle_0=\frac{1}{8}B(X, Y)$, with respect to which $\{ E_1, E_2, E_3 \}$ is orthonormal.
Following the notations in Remark~\ref{rflg}, we choose $g$ so that $E_1$, $E_2$, $E_3$ are the eigenvectors with eigenvalues $\lambda_1=\lambda_2=\lambda <1$, and $\lambda_3=1/2$, respectively. Then $g<g_0$. On the other hand, by~(\ref{ffsclg}),
\[
R_g-R_{g_0}= \frac{1}{4} \sum_{i, j, k=1}^3 \big(C^k_{ij}\big)^2
\left[ \frac{2}{\lambda_i}- \frac{\lambda_{k}}{\lambda_i\lambda_{j}} -1 \right]= -\frac{1}{\lambda^2} + O\left(\frac{1}{\lambda}\right),
\]
as $\lambda \rightarrow 0^{+}$. Thus, for $\lambda$ sufficiently small, we have $R_g< R_{g_0}$.

Note that this represents the opposite rescaling of the standard sphere as compared to the example of Berger's sphere mentioned in \cite[p.~34]{14}.
\end{Remark}

\subsection*{Acknowledgements}

We are deeply grateful to Wolfgang Ziller for suggesting the more general result for the homogeneous space as well as bringing the work \cite{12} to our attention, which considerably simplifies our previous computation as well as generalizes to the more general case of homogeneous spaces. We thank Wolfgang for many helpful discussions. Thanks are also due to Professor Yurii Nikonorov for similar remarks and useful comments. Finally we thank the referee for the careful reading of the multiple versions of the paper and for many constructive suggestions which have helped improve the exposition. This research is partially supported by NSFC (Y.S.) and the Simons Foundation (X.D.).

\pdfbookmark[1]{References}{ref}
\LastPageEnding

\end{document}